\newtheorem{definition}{Definition}[section]
\newtheorem{theorem}{Theorem}[section]
\newtheorem{proposition}{Proposition}[section]
\newtheorem{lemma}{Lemma}[section]
\newtheorem{corollary}{Corollary}[section]
\newtheorem{conjecture}{Conjecture}[section]
\title{Minimum Number of Fox Colors for Small Primes}
\author{Pedro Lopes$\sp{1, 2}$ and Jo\~ao Matias$\sp{2}$\\
$\sp{1}$Center for Mathematical Analysis, Geometry and Dynamical Systems\\
$\sp{2}$Department of Mathematics\\
Instituto Superior T\'ecnico\\
Technical University of Lisbon\\
Av. Rovisco Pais\\
1049-001 Lisbon\\
Portugal\\
        \texttt{pelopes@math.ist.utl.pt  joao.matias@ist.utl.pt}\\}
\date{April 2, 2011}
\begin{document}

\maketitle

\begin{abstract}
This article concerns exact results on the minimum number of colors of a Fox coloring over the integers modulo $r$, of a link with non-null determinant.

Specifically, we prove that whenever the least prime divisor of the determinant of such a link and the modulus $r$ is $2, 3, 5$, or $7$, then the minimum number of colors is $2, 3, 4$, or $4$ (respectively) and conversely.

We are thus led to conjecture that for each prime $p$ there exists a unique positive integer, $m_p$, with the following property. For any link $L$ of non-null determinant and any modulus $r$ such that $p$ is the least prime divisor of the determinant of $L$ and the modulus $r$, the minimum number of colors of $L$ modulo $r$ is $m_p$.
\end{abstract}

\bigbreak

Keywords: Knots, colorings, colors, minimum number of colors

\bigbreak

MSC 2010: 57M27

\bigbreak

\section{Introduction}

\noindent

The minimum number of colors in a Fox coloring in a given modulus $r$ is a challenging link invariant since, in principle, it requires analyzing all  diagrams of a given link in order to establish its value. Nonetheless, several results have been obtained, see \cite{Frank, kl, lm, Oshiro, Saito, satoh}, although the problem is far from solved. We now develop the terminology we use in this article.

\bigbreak

Fix a positive integer $r>1$ and consider a diagram $D$ of a link $L$ (\cite{lhKauffman}). A (Fox) $r$-coloring of $D$ is an assignment of integers mod $r$ (also known as ``colors'') to the arcs of $D$ such that, at each crossing the ``coloring condition'' holds i.e., ``twice the color at the over-arc equals the sum of the colors at the under-arcs, mod $r$'' (\cite{Fox}, see Figure \ref{fig:cross}). We let $\mathbf{=_r}$ stand for ``equality modulo $r$''.
\begin{figure}[!ht]
    \psfrag{a}{\huge $a$}
    \psfrag{b}{\huge $b$}
    \psfrag{c}{\huge $c$}
    \psfrag{ast}{\huge $2b=\sb{r}a+c$}
    \centerline{\scalebox{.50}{\includegraphics{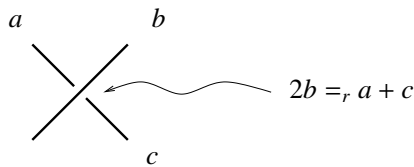}}}
    \caption{Colors and the coloring condition at a crossing.}\label{fig:cross}
\end{figure}
Suppose $D$ is endowed with an $r$-coloring and undergoes a Reidemeister move. Colors can be consistently assigned to the new diagram in order to obtain a new $r$-coloring from the original one, by using the appropriate colors in the new arcs. In this way, there is a bijection between the $r$-colorings, before and after the Reidemeister moves have been performed (\cite{pLopes}). This means that the number of $r$-colorings is a link invariant.

The $r$-colorings of $D$ can also be regarded as the solutions, mod $r$, of the system of homogeneous linear equations we obtain when we regard the arcs of $D$ as algebraic variables and read off each crossing the equation ``twice the over-arc minus the under-arcs equals zero''. This system of homogeneous linear equations and its coefficient matrix are called the ``coloring system of equations'' and the ``coloring matrix'' of $D$, respectively. Upon performance of Reidemeister moves, the new coloring matrix relates to the former by elementary transformations on matrices, as described on page $50$ in \cite{Lickorish}, see also \cite{crowellandfox}. In this way the ideals generated by the minors of this matrix are also link invariants. Since along any row of a coloring matrix the entries are $2, -1, -1$, and $0$'s, then adding all its columns we obtain a column with $0$'s. Thus, the determinant of the coloring matrix is $0$. However, it does follow that the first minors of any two coloring matrices of the link are equal, up to sign, which gives rise to the following link invariant.

\begin{definition}\label{def:det}
 The determinant of a link $L$, denoted $\det L$, is the absolute value of any first minor of any coloring matrix of any diagram of $L$.
\end{definition}

We remark that the determinant of a link becomes the usual determinant of a knot in case the link under study is a knot i.e., a $1$-component link. We thus extend this notion to links. In this article we adhere to links of non-null determinant for it seems that links of null determinant possess a different nature. We intend to study colorings of links with null determinant in another article.

\bigbreak

Given any integer $r>1$ and any link, $L$, there are always $r$-colorings of $L$, the so-called trivial colorings. Each of these $r$-colorings is obtained by assigning, in any diagram of $D$, the same color to each of its arcs. There are exactly $r$ trivial $r$-colorings of $L$. The consistent assignment of colors we mentioned before, upon performance of Reidemeister moves, takes trivial $r$-colorings to trivial $r$-colorings. Thus it takes non-trivial $r$-colorings (i.e., $r$-colorings which use more than one color) to non-trivial $r$-colorings.

If  a link $L$ admits non-trivial $r$-colorings, it would be interesting to know what is the least number of colors it takes to set up a non-trivial $r$-coloring of $L$ over all diagrams of $L$.

\begin{definition} Let $r$ be an integer greater than $1$. Let $L$ be a link admitting non-trivial $r$-colorings, let $D$ be a diagram of $L$ and let $n\sb{r, D}$ be the minimum number of colors mod $r$ it takes to construct a non-trivial $r$-coloring of $D$. Set
\[
mincol\sb{r}L := \min \{ \, n\sb{r, D} \, \mid \, D \text{ is a diagram of } L  \, \}
\]
We call $mincol\sb{r}L$ {\rm the minimum number of colors for the link $L$}.
\end{definition}

$mincol\sb{r}L$ is tautologically a link invariant. This invariant was set forth in \cite{Frank}. In this article it was also conjectured that given a prime $p$ and an alternating
knot $L$ of prime determinant $p$, each non-trivial $p$-coloring on any minimal diagram of $L$ assigns distinct colors to distinct arcs. This came to be known as the Kauffman-Harary conjecture and is now solved in \cite{msolis}.

\bigbreak

Now that the Kauffman-Harary conjecture has been proven to be true, the current article looks into the other possibilities for colorings of links without the constraint of using a prime modulus for the colorings which coincides with the determinant of the link under study. Moreover, this article is not a survey. It is a self-contained research article which acts as a survey on certain technical details.

\bigbreak

\bigbreak

In \cite{Frank}, an interesting example of Teneva showed that we could further reduce the number of colors of a non-trivial $p$-coloring of an alternating knot of prime determinant $p$ if we used non-minimal diagrams, for the particular case $p=5$ and the knot $5\sb{1}$, also known as the torus knot of type $(2, 5)$. This issue was investigated and developed, leading to the article \cite{kl} where exact values and estimates for the minimum number of colors of the torus knots of type $(2, n)$ were presented. These exact values and these estimates were expressed in terms of the least prime divisor of the modulus $r$ (with respect to which the colorings were being considered) and of the crossing number of the $T(2, n)$ under study (or so it was believed at that time). We note that, for the $T(2, n)$'s, the determinant and the crossing number are equal. And the current article illustrates that, besides the modulus $r$, the second relevant feature seems to be the determinant of the link. In fact, we express our exact results as a function of the least prime divisor of the modulus $r$ and of the determinant of the link under study. Here we introduce the following notation which simplifies the statement of our main result.

\bigbreak

\begin{definition} Let $a$ and $b$ be positive integers.

We let $(a, b)$ stand for their greatest common divisor. Occasionally we will let one of the arguments be $0$ in which case $(a, b)$ will stand for the other argument.

We let $\langle a, b\rangle$ stand for $1$ if $(a, b)=1$, else we let $\langle a, b\rangle$ stand for their least common prime divisor. Occasionally we will let one of the arguments be $0$ in which case $\langle a, b\rangle$ will stand for the least prime divisor of the other argument.
\end{definition}

We can now state our main result concisely as follows.

\bigbreak

\begin{theorem}\label{thm:exactmin}Let $r$ be an integer greater than $1$.

Let $L$ be a link with $\det L \neq 0$. Then

\begin{enumerate}
\item  \quad $\langle r, \det L \rangle = 2$  \quad if and only if  \quad $mincol\sb{r}L = 2$
\item   \quad $\langle r, \det L \rangle = 3$  \quad if and only if  \quad $mincol\sb{r}L = 3$
\item   \quad $\langle r, \det L \rangle \in \{ 5, 7 \}$  \quad if and only if  \quad $mincol\sb{r}L = 4$
\item    \quad $\langle r, \det L \rangle > 7$, \quad if and only if  \quad $mincol\sb{r}L \geq 5$
\end{enumerate}
\end{theorem}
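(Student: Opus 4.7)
The four clauses partition the range $\langle r, \det L \rangle \geq 2$, which is exactly when non-trivial $r$-colorings of $L$ exist, and the four conclusions $mincol_r L \in \{2, 3, 4, \geq 5\}$ are mutually exclusive. It therefore suffices to prove the four forward implications; the converses then follow by exclusion. I plan to treat them in order of increasing difficulty.

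For clauses (i) and (ii), write $p = \langle r, \det L \rangle \in \{2, 3\}$. Since $p \mid \det L$ the link admits a non-trivial mod-$p$ coloring, and since $p \mid r$, multiplying each arc color by $r/p$ produces a non-trivial $r$-coloring whose palette is contained in $\{0, r/p, \ldots, (p-1)r/p\}$, giving $mincol_r L \leq p$. For $p = 2$ the matching lower bound is automatic. For $p = 3$ I still need to rule out two-color realizations; since $\det L \neq 0$ forces $L$ to be non-split, any non-trivial $r$-coloring contains a crossing whose three arcs bear at least two distinct colors. If only two colors $a \neq b$ are used, the coloring condition at such a crossing forces $2(a - b) \equiv 0 \pmod{r}$, so $r$ is even and the coloring factors through a non-trivial mod-$2$ coloring of $L$. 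This forces $2 \mid \det L$ and $\langle r, \det L \rangle = 2$, contradicting clause (ii).

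Clause (iii) is constructive. The lower bound $mincol_r L \geq 4$ will follow by extending the above method one step further: a non-trivial three-color $r$-coloring should force $3 \mid \gcd(r, \det L)$, which is incompatible with $\langle r, \det L \rangle \in \{5, 7\}$. The upper bound requires actually exhibiting a four-color $r$-coloring on some diagram of $L$. My plan is to generalize Teneva's $5_1$-construction recalled in the introduction: starting from the scaled mod-$p$ coloring on any diagram (with $p = 5$ or $p = 7$), I will iteratively perform Reidemeister-type local surgeries that reroute each arc whose color lies outside a chosen four-element palette through a small tangle whose arcs all carry colors from that palette. Iterating gives a diagram of $L$ realizing a non-trivial $r$-coloring in exactly four colors. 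The palettes and substitution templates will need to be chosen differently for $p = 5$ and $p = 7$.

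Clause (iv) is the main obstacle. Suppose, for contradiction, that $\langle r, \det L \rangle \geq 11$ and that some diagram of $L$ admits a non-trivial $r$-coloring with palette $S \subset \mathbb{Z}/r$ of size at most four. Let $G$ be the subgroup of $\mathbb{Z}/r$ generated by the differences $c - c'$ for $c, c' \in S$, and set $s = |G|$. After a translation the coloring becomes a non-trivial coloring valued in $G \cong \mathbb{Z}/s$, so $\gcd(s, \det L) > 1$; any common prime divisor of $s$ and $\det L$ also divides $r$, hence is at least $\langle r, \det L \rangle \geq 11$, so $s$ has a prime factor $\geq 11$. The goal will then be to classify, up to affine equivalence in $\mathbb{Z}/s$, the subsets of $\mathbb{Z}/s$ of size at most four capable of supporting a Fox coloring system, and to show that in every case $s$ is forced to be $7$-smooth. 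Concretely, at each non-trivial crossing one has $2x = y + z$ with $x, y, z \in S$; enumerating the possible patterns of such equations and the auxiliary vanishings $2(y - x) \equiv 0 \pmod{s}$ yields a finite list of linear relations that $S$ and $s$ must satisfy, and I expect to verify by case analysis on $|S| \in \{2, 3, 4\}$ that this list never accommodates a prime of $s$ exceeding $7$. This enumeration of permissible small-palette systems is the combinatorial heart of the argument and the step in which I expect the bulk of the technical work to reside.
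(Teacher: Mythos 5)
Your overall architecture tracks the paper's: reducing a small-palette $r$-coloring through the subgroup generated by color differences down to a modulus dividing $(r,\det L)$ is exactly the mechanism of the paper's Lemma \ref{lem:mainlemma}, and the crossing analysis for two and three colors is Proposition \ref{prop:2and3}. Two of your steps, however, have genuine gaps. The smaller one is the upper bound in clause (iii): what you need there is precisely Satoh's theorem (every link with a non-trivial $5$-coloring has one using only four colors) and Oshiro's theorem (the analogue for $7$), each of which is an entire research paper; the present paper simply cites \cite{satoh} and \cite{Oshiro}. Your plan of ``local surgeries rerouting arcs through small tangles'' is the right genre of argument but is only a sketch, and for $p=7$ in particular the required templates are genuinely hard to produce. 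This gap is repairable by citation, but as written the work is not done.

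The serious gap is in clause (iv), where the stated goal of your case analysis is false. After affine normalization, the surviving four-element palettes are $\{0,1,2,3\}$, $\{0,1,2,4\}$ and $\{0,2,3,4\}$, and for each of these every admissible polychromatic crossing relation $2x=y+z$ is an identity over the integers (e.g.\ $2\cdot 1=0+2$ and $2\cdot 2=1+3$ for the first palette). These relations therefore hold modulo \emph{every} $s$, so no enumeration of local crossing constraints and auxiliary vanishings can force $s$ to be $7$-smooth; the local data simply do not constrain $s$. The step you are missing is the global one that closes the paper's Theorem \ref{thm:extsaito}: if such a palette supported a non-trivial coloring of a diagram of $L$ modulo some prime $p\geq 11$, then, since $|2x-y-z|\leq 6<p$ at each crossing, the same diagram with the same colors would be a valid non-trivial $q$-coloring for every prime $q>7$; by Proposition \ref{prop:detcol} every such $q$ would divide $\det L$, so $\det L$ would have infinitely many prime divisors, contradicting $\det L\neq 0$. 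You invoke $\det L\neq 0$ only to get $\gcd(s,\det L)>1$ and never use the finiteness of its prime divisors, which is where the hypothesis actually does its work; without that step clause (iv), and hence the converse directions you obtain by exclusion, do not close.
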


\bigbreak

This theorem is important because it solves the problem of the minimum number of colors for each link $L$ with non-null determinant and for each modulus $r$ such that $\langle r, \det L \rangle$ is either $2, 3, 5$ or $7$. Moreover it does so in a uniform way in the sense that the feature of the link that matters for the theorem is whether its determinant is relatively prime to $r$ or not. If they are relatively prime, there are no non-trivial $r$-colorings. If they are not relatively prime, then the relevant parameter for Theorem \ref{thm:exactmin} is $\langle r, \det L \rangle$. This leads us to conjecture the same sort of behavior for the other primes that may occur as $\langle r, \det L \rangle$.

\bigbreak

\begin{conjecture}\label{conj:conj}
Let $r$ be an integer greater than $1$, let $L, L'$ be links with non-null determinants.
\begin{itemize}
\item If $\langle r, \det L \rangle = \langle r, \det L' \rangle $, then $mincol\sb{r} L = mincol\sb{r} L'$

We then set, for each prime $p$
\[
m\sb{p}:= mincol\sb{r}L \qquad \text{ for any $L$ such that $p=\langle r, \det L \rangle $ and $\det L \neq 0$}
\]
In particular,
\[
m\sb{2}=2, \qquad \qquad m\sb{3}=3, \qquad \qquad m\sb{5}=m\sb{7}=4
\]

\item $m\sb{p}$ is a non-decreasing function of $($prime$)$ $p$
\end{itemize}
\end{conjecture}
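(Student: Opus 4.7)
The plan is to split Conjecture \ref{conj:conj} into two halves: (a) that $mincol_r L$ depends on $(r,\det L)$ only through the prime $p:=\langle r,\det L\rangle$, and (b) the non-decreasing property of $m_p$ in $p$. Both halves should reduce to purely $\mathbb{Z}/p\mathbb{Z}$-coloring questions, since when $p\mid r$ and $p\mid\det L$ the projection $\mathbb{Z}/r\mathbb{Z}\twoheadrightarrow\mathbb{Z}/p\mathbb{Z}$---combined, where the direct projection collapses to a trivial coloring, with a shift-and-divide-by-$p$ step that produces a non-trivial coloring modulo $r/p$ with the same palette size---connects non-trivial $r$-colorings to non-trivial $p$-colorings without increasing the palette. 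The guiding principle, already visible in Theorem \ref{thm:exactmin}, is that the combinatorics of minimal palettes is dictated by $p$ alone and not by the particular link or modulus.

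For the upper bound $mincol_r L\leq m_p$, I would look for a \emph{universal color-reduction template}: a finite list of local diagrammatic moves (Reidemeister moves together with insertion/removal of ``exchange tangles'' that substitute one strand of a color $c\notin S_p$ for several strands colored from a fixed extremal palette $S_p\subset\mathbb{Z}/p\mathbb{Z}$ of size $m_p$), valid for every modulus divisible by $p$, whose iterated application forces every non-trivial $r$-coloring of every diagram of $L$ into one whose palette lies in $S_p$. This is precisely the structural form of the arguments for $p=2,3,5,7$; the task is to produce such templates uniformly in $p$, parametrised by an explicit description of $S_p$.

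The lower bound $mincol_r L\geq m_p$ is, in my view, the main obstacle and the reason the statement remains a conjecture. After reducing to $\mathbb{Z}/p\mathbb{Z}$ as above, the problem becomes: characterise the minimum-size subsets $S\subset\mathbb{Z}/p\mathbb{Z}$ that can appear as the palette of some non-trivial $p$-coloring of some diagram of some link with $p\mid\det L$. The crossing relation $2b\equiv a+c\pmod p$ imposes substantial additive structure on $S$ (closure under $(a,c)\mapsto(a+c)/2$ at every realised crossing), and one expects the extremal palettes to be arithmetic progressions or cosets of subgroups; but diagrammatic realisability is a further, largely geometric, constraint that a purely additive-combinatorics analysis can miss. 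A complete proof would require identifying $S_p$ for each prime $p$, exhibiting a link and diagram realising it, and excluding every strictly smaller candidate palette---a combinatorial task whose complexity appears to grow quickly with $p$ and for which no uniform method is currently in sight.

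For the monotonicity $m_p\leq m_q$ when $p<q$, conditional on part (a), the cleanest route is to use a universal family of test links such as the torus knots $T(2,n)$ of \cite{kl}: tuning $n$ adjusts $\det L=n$ to have any desired prime divisor, and the corresponding extremal colorings should compare transparently. The risk is that an extremal $q$-coloring need not project to an extremal $p$-coloring, so one may instead have to argue indirectly---perhaps by induction on $p$, leveraging exactly the structural classification of extremal palettes demanded by the lower bound above. In summary, the upper bound is a matter of discovering the right family of colour-reduction moves, while both the lower bound and the monotonicity ultimately hinge on a structural description of minimum-palette non-trivial $p$-colorings that, for $p\geq 11$, remains to be found.
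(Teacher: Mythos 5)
This statement is labelled a \emph{conjecture} in the paper, and the paper offers no proof of it: the authors only establish the special cases $p=2,3,5,7$ (the values $m_2=2$, $m_3=3$, $m_5=m_7=4$) via Theorem \ref{thm:exactmin}, whose proof rests on Lemma \ref{lem:mainlemma} (reduction of a non-trivial $r$-coloring to a non-trivial $p$-coloring for some prime $p\mid(r,\det L)$ with the same palette size), on Lemma \ref{lem:pcolrcol} for the reverse direction, and on the external results of Satoh and Oshiro for the upper bounds at $p=5$ and $p=7$ together with the Saito-type lower bound of Theorem \ref{thm:extsaito}. Your submission is therefore correctly calibrated in spirit: you recognize that the statement is open, and the reduction you describe (projecting mod $r$ colorings to mod $p$ colorings and lifting back without changing the palette size) is exactly the mechanism of Lemmas \ref{lem:mainlemma} and \ref{lem:pcolrcol}; your ``universal color-reduction template'' is a fair abstraction of what Satoh and Oshiro actually do for $p=5,7$, and your proposed use of the $T(2,n)$ family for monotonicity echoes \cite{kl}.

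That said, what you have written is a research program, not a proof, and the gap is exactly where you locate it: nothing in the paper (or in your sketch) supplies, for a general prime $p\geq 11$, either (i) a classification of the minimum-size palettes $S\subset\mathbb{Z}/p\mathbb{Z}$ realizable by non-trivial $p$-colorings of diagrams of links with $p\mid\det L$, or (ii) a uniform family of diagrammatic moves reducing an arbitrary palette to such an $S_p$. One further caution on your first bullet: even granting Lemma \ref{lem:mainlemma}, the reduction lands on \emph{some} prime divisor of $(r,\det L)$, not necessarily the least one, so the claim that $mincol_rL$ depends only on $\langle r,\det L\rangle$ already presupposes the monotonicity of $m_p$ (otherwise a larger prime divisor could yield a smaller minimum); the two bullets of the conjecture are thus more entangled than your split into independent halves (a) and (b) suggests, and any eventual proof will likely have to treat them together, as the paper implicitly does for the small primes by proving both directions of each equivalence in Propositions \ref{prop:2}--\ref{prop:5,7}.
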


\bigbreak

A ``split link'' is a link which can be made to be contained in two disjoint ball neighborhoods, each neighborhood containing part of the link. A ``non-split'' link is a link which is not a ``split link''. We remark that the minimum number of colors for a non-trivial coloring of a split link is trivially $2$. We further remark that split links have null determinant. In particular, by adhering to links with non-null determinant in this article, we are disregarding trivial problems.

\bigbreak

The following observations on Theorem \ref{thm:exactmin} are in order. It seems more appropriate to make them here than to over-burden the statement of the Theorem. The ``only if'' part of statement $1$ of the Theorem is true without the condition ``$\det L \neq 0$''; the ``if'' part of statement $1$ is true with ``$\det L \neq 0$'' replaced by the weaker condition ``non-split link $L$''. For the ``only if'' part of statement $2$, ``$\det L \neq 0$'' can be replaced by ``non-split link $L$''; for the ``if'' part, ``$\det L \neq 0$'' can be removed. For the ``only if'' part of statement $3$,  ``$\det L \neq 0$'' can be replaced by  ``non-split link $L$''. The veracity of these facts follows from Propositions \ref{prop:2}, \ref{prop:3}, and \ref{prop:5,7}.

\bigbreak

One of the underpinnings of Theorem \ref{thm:exactmin} is Lemma \ref{lem:mainlemma}. We state it here because we believe it is of independent interest.

\begin{lemma}\label{lem:mainlemma} Let  $L$ be a non-split link admitting non-trivial $r$-colorings. There exists a prime $p\mid (r, \det L)$ such that
\[
mincol\sb{r}L = mincol\sb{p}L
\]
\end{lemma}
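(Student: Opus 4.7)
The plan is to sandwich $mincol\sb{r}L$ between two copies of $mincol\sb{p}L$ for a suitably chosen prime $p\mid(r,\det L)$: the upper bound $mincol\sb{r}L \le mincol\sb{p}L$ will hold for every such prime $p$, while the lower bound $mincol\sb{p}L \le mincol\sb{r}L$ will pin down one specific prime extracted from an optimal $r$-coloring. Combining the two inequalities for that specific $p$ yields the desired equality.

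For the upper bound, let $p$ be any prime with $p\mid(r,\det L)$. Since $p\mid\det L$, the link $L$ admits non-trivial $p$-colorings, so $mincol\sb{p}L$ is defined. Take a diagram $D$ and a non-trivial $p$-coloring $c$ realising $mincol\sb{p}L$; because $p\mid r$, multiplication by $r/p$ is an injective homomorphism $\mathbb{Z}/p\mathbb{Z}\hookrightarrow\mathbb{Z}/r\mathbb{Z}$, and the coloring conditions are linear, so $\tilde c := (r/p)\cdot c$ is a non-trivial $r$-coloring of $D$ using exactly the same number of colors as $c$. Hence $mincol\sb{r}L \le mincol\sb{p}L$.

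For the lower bound, I would extract the prime from an optimal coloring. Take a diagram $D$ and a non-trivial $r$-coloring $c$ achieving $mincol\sb{r}L = m$. Translating $c$ by a constant (which preserves both the coloring conditions and the color count), we may assume $0$ is among the colors used; call the remaining ones $c\sb{2},\dots,c\sb{m}$. Set $d := \gcd(r, c\sb{2}, \dots, c\sb{m})$; non-triviality of $c$ forces $d<r$, so $r/d>1$. Dividing every color by $d$ yields a well-defined non-trivial coloring $c''$ of $D$ modulo $r/d$, still using $m$ colors, and with $\gcd(r/d,\, c\sb{2}/d,\dots, c\sb{m}/d)=1$. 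For any prime $p\mid r/d$, the reduction of $c''$ modulo $p$ uses at most $m$ colors and is non-trivial because at least one $c\sb{i}/d$ is a unit modulo $p$; its existence forces $p\mid\det L$, and $p\mid r/d \mid r$ forces $p\mid(r,\det L)$. Therefore $mincol\sb{p}L \le m = mincol\sb{r}L$.

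The main obstacle is the lower-bound step: one must produce, from the internal arithmetic of a single optimal $r$-coloring, a prime that simultaneously divides $r$ and $\det L$ \emph{and} at which the coloring survives in non-trivial form. The detour through the auxiliary modulus $r/d$ is the key device enabling this, because reducing $c$ modulo an arbitrary prime divisor of $r$ may well annihilate the coloring if one does not first strip the common factor $d$. The upper bound, by contrast, is a short consequence of the injectivity of multiplication by $r/p$ as a map $\mathbb{Z}/p\mathbb{Z}\hookrightarrow\mathbb{Z}/r\mathbb{Z}$, which automatically preserves the color count.
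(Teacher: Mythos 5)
Your proof is correct and takes essentially the same route as the paper's: the upper bound is the paper's Lemma \ref{lem:pcolrcol}, and your lower bound (strip the gcd $d$ of the modulus and the colors, pass to the auxiliary modulus $r/d$, then reduce modulo any prime $p\mid r/d$, using primitivity of the rescaled colors to guarantee non-triviality) is exactly the paper's two-step argument with $g$ in place of your $d$. No gaps.
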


\bigbreak

\subsection{Organization and Acknowledgements}\label{subsect:ackn}

\noindent

This article is organized as follows. Section \ref{sect:class} is devoted to known results. In Subsection \ref{subsect:det} we go over the notion of determinant of a link and some of its consequences in order to bring out its relationship to the minimum number of colors. In particular, for any integers $p\mid r$, and link $L$, it is shown that a non-trivial $p$-coloring of $L$ gives rise to a non-trivial $r$-coloring of $L$. It follows that $mincol\sb{r}L \leq mincol\sb{p}L$. Although we believe this to be known material, we were not able to find a complete reference for it, so we prove this result here. Moreover, this contrasts with a sort of converse proven in Section \ref{sect:proof}, namely that for every non-trivial $r$-coloring, there exists a non-trivial $p$-coloring for some $p\mid r$ with the same number of colors. It follows then that $mincol\sb{r}L = mincol\sb{p}L$. This technical result (Lemma \ref{lem:mainlemma}) is crucial in our findings. Furthermore, Lemma \ref{lem:mainlemma} is a new result, to the best of our knowledge. In Subsection \ref{subsect:min23} we recall a few results relating the minimum number of colors with the divisibility of the modulus, the proofs being presented here for completeness. In Subsection \ref{subsect:saitoproof} we prove a variation on a result of Saito's in \cite{Saito}. In Section \ref{sect:proof} we prove Lemma \ref{lem:mainlemma} and Theorem \ref{thm:exactmin}.

\bigbreak

P.L. and J.M. acknowledge support by the Funda\c{c}\~{a}o para a Ci\^{e}ncia e a Tecnologia (FCT /
Portugal). They  also gratefully acknowledge Louis Kauffman for his helpful remarks.

\bigbreak

\section{Background Results}\label{sect:class}

\noindent

This Section is a review of known material whose proofs are either scattered through several references or difficult to find. We present this material because it is essential to the proof of Theorem \ref{thm:exactmin}.

\bigbreak

\subsection{The Determinant}\label{subsect:det}

\noindent

In this Subsection we recall a few facts involving the determinant  of a link. These facts belong to the folklore of knot theory, the best reference being perhaps \cite{livingston}. Our purpose is to pave the way to showing the dependence of the minimum number of colors on the determinant of the link.

\bigbreak

\begin{proposition}\label{prop:detcol} Let $r$ be an integer greater than $1$. Let $L$ be a link. There are non-trivial $r$-colorings of $L$ if and only if $(r, \det L) \neq 1$.
\end{proposition}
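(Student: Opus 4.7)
My plan is to count $r$-colorings directly from the coloring matrix and reduce to the classical fact that a square integer matrix has a non-trivial kernel modulo $r$ if and only if its determinant shares a common factor with $r$. Fix a diagram $D$ of $L$ with $n$ arcs and $n \times n$ coloring matrix $M$. Each row of $M$ has entries $2,-1,-1,0,\ldots,0$ and thus sums to $0$; dually $M\vec{1} = \vec{0}$, so the columns also sum to the zero vector. Since the first row of $M$ is the negative of the sum of the remaining rows, the matrix $M^-$ obtained by deleting row $1$ has the same kernel as $M$ modulo $r$. Let $M_0$ be $M^-$ with its first column also removed; this is a square $(n-1)\times(n-1)$ matrix with $|\det M_0| = \det L$ by Definition \ref{def:det}.

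Writing $\vec{c}_j^{\,-}$ for the $j$-th column of $M^-$, the identity $\sum_j \vec{c}_j^{\,-} = \vec{0}$ gives, for every $\vec{x} = (x_1,\vec{x}') \in (\mathbb{Z}/r)^n$,
\[
M^-\vec{x} \;=\; \sum_{j\geq 2}(x_j - x_1)\,\vec{c}_j^{\,-} \;=\; M_0(\vec{x}' - x_1\vec{1}).
\]
Hence $M\vec{x} \equiv \vec{0} \pmod r$ iff $\vec{x}' - x_1\vec{1} \in \ker M_0$, so the total number of $r$-colorings of $D$ is $r \cdot |\ker M_0|$. The $r$ trivial colorings are exactly the pairs $(x_1, x_1\vec{1})$, i.e.\ the image of $\vec{0}\in\ker M_0$, so non-trivial $r$-colorings exist iff $\ker M_0 \neq \{0\}$ inside $(\mathbb{Z}/r)^{n-1}$.

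It then remains to apply the elementary lemma used at the start. The direction $\gcd(\det M_0, r) = 1 \Rightarrow \ker M_0 = 0$ is immediate from $\operatorname{adj}(M_0)\,M_0 = \det(M_0)\,I$, which makes $M_0$ invertible mod $r$ in that case. For the converse, pick a prime $p \mid (\det L, r)$, produce a non-zero $\vec{y}_0 \in \mathbb{F}_p^{n-1}$ in $\ker M_0 \pmod p$ (possible since $M_0$ is singular over $\mathbb{F}_p$), lift $\vec{y}_0$ to $\mathbb{Z}^{n-1}$, and multiply by $r/p$ to get a non-zero element of $\ker M_0$ inside $(\mathbb{Z}/r)^{n-1}$. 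The degenerate case $\det L = 0$ is handled separately by scaling a rational kernel vector of $M_0$ to a primitive integer vector, which remains non-zero modulo $r$.

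The step I expect to be the most delicate is the factorisation $M^-\vec{x} = M_0(\vec{x}' - x_1\vec{1})$: it is what converts the rank-deficient $n \times n$ system into an honest square system of determinant $\det L$, and it relies in an essential way on the column-sum relation $\sum_j \vec{c}_j^{\,-} = \vec{0}$ (equivalently, on $\vec{1}\in\ker M$). Once this factorisation and the clean counting $r\cdot|\ker M_0|$ are in place, the passage to the gcd condition on $\det L$ is a routine piece of linear algebra over $\mathbb{Z}/r$.
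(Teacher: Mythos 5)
Your reduction to the square matrix $M_0$ and everything downstream of it (the factorisation $M^-\vec{x}=M_0(\vec{x}'-x_1\vec{1})$, the count $r\cdot|\ker M_0|$, the adjugate argument, the lift by $r/p$, and the primitive-vector trick when $\det L=0$) is sound, and it is a genuinely different route from the paper's, which diagonalises the full coloring matrix over $\mathbb{Z}$ and reads everything off the diagonal entries. But there is a genuine gap at the very first step: the claim that ``the first row of $M$ is the negative of the sum of the remaining rows'' is false in general. What is true, and what you correctly state just before, is that each \emph{row} of $M$ sums to $0$, i.e.\ $M\vec{1}=\vec{0}$, i.e.\ the \emph{columns} sum to the zero vector. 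Your claim is the transposed statement, that the rows sum to the zero vector, equivalently that every column of $M$ sums to $0$. The entries in the column of an arc $a$ sum to $2k_a-2$, where $k_a$ is the number of crossings at which $a$ is the over-arc, and this vanishes only when $k_a=1$ for every arc (as in an alternating diagram). Perform a Reidemeister II move pushing one arc over another and you obtain a diagram with an arc that is the over-arc at three crossings; the rows of its coloring matrix do not sum to zero, and your justification for discarding row $1$ collapses.

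The gap matters in exactly one direction. For ``non-trivial colorings $\Rightarrow(r,\det L)\neq 1$'' you only need $\ker M\subseteq\ker M^-$, which is free. But for ``$(r,\det L)\neq 1\Rightarrow$ non-trivial colorings'' you produce a vector annihilated by $M^-$ and must still check that the deleted first equation holds; until then you have not produced a coloring. The fact you need is that one relation of the coloring system is redundant over $\mathbb{Z}$: there is a relation $\sum_i\epsilon_iR_i=0$ among the rows with all $\epsilon_i=\pm1$, which comes from the redundancy of one relator in the Wirtinger presentation (or, when $\det L\neq 0$, from the fact that all first minors of $M$ equal $\pm\det L$ combined with $\mathrm{adj}(M)\,M=0$). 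That fact is true and standard, and once invoked it lets you delete \emph{any} one row without changing the solution set mod $r$; but it is not a consequence of the row-sum observation you cite, so it must be stated and justified for your argument to close.
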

 \begin{proof} The coloring matrix of any link $L$ is an integer matrix, with one $2$, two $-1$'s, and $0$'s along each row; in particular, its determinant is $0$,  as we saw above. The coloring matrix can thus  be diagonalized by way of a finite number of elementary transformations, and one of the entries of the diagonal in the diagonalized matrix is zero. The product of the remaining elements in this diagonal is $\det L$, the determinant of the link $L$. Suppose we are working over a modulus $r$ which is relatively prime to $\det L$. There is then just one zero on the diagonal and the variable corresponding to the zero along the diagonal can take on any value $0, 1, 2, \dots , r-1 $ (mod $r$), whereas the other variables have to take on the value $0$ (mod $r$). There are then $r$ distinct solutions of the coloring system of equations and each one represents a trivial $r$-coloring.

Assume now $r$ and $\det L$ are not relatively prime and let $p$ be a common prime factor of $r$ and $\det L$. Then $p\mid \det L$ and so, mod $p$, there are at least two $0$'s mod $p$ along the diagonal. Hence at least two variables can take on any value from $\{ 0, 1, 2, \dots , p-1 \}$ mod $p$. That is, there are non-trivial $p$-colorings. Let $D$ be a diagram of $L$ endowed with such a non-trivial $p$-coloring, with $(c\sb{i})$ the sequence of colors, mod $p$, the arcs of this diagram take on. Choose the $c\sb{i}$'s such that $0\leq c\sb{i} < p$, for each $i$. Consider the triples $(i\sb{1}, i\sb{2}, i\sb{3})$ such that:
\[
2c\sb{i\sb{1}}-c\sb{i\sb{2}}-c\sb{i\sb{3}}=p\, l\sb{i\sb{1}, i\sb{2}, i\sb{3}} \qquad \text{ at some crossing of $D$ with } l\sb{i\sb{1}, i\sb{2}, i\sb{3}} \text{ an integer}
\]
Multiplying each of these equations by $\frac{r}{p}$ we obtain:
\[
2\frac{r}{p}c\sb{i\sb{1}}-\frac{r}{p}c\sb{i\sb{2}}-\frac{r}{p}c\sb{i\sb{3}}=r\, l\sb{i\sb{1}, i\sb{2}, i\sb{3}} \qquad \text{ at some crossing of $D$ with } l\sb{i\sb{1}, i\sb{2}, i\sb{3}} \text{ an integer}
\]
i.e. we obtain an $r$-coloring of $D$ with colors $(\frac{r}{p}c\sb{i})$. Since the $c\sb{i}$'s form a non-trivial $p$-coloring, there are indices $i\sb{0}\neq i\sb{0}'$ such that $0\leq c\sb{i\sb{0}}<c\sb{i\sb{0}'}<p$.  Then, multiplying by $\frac{r}{p}$ we obtain $0\leq \frac{r}{p}c\sb{i\sb{0}} < \frac{r}{p}c\sb{i\sb{0}'}< r$ and  so $(\frac{r}{p}c\sb{i})$ constitutes a non-trivial $r$-coloring of $D$. This concludes the proof.
\end{proof}

The construction of an $r$-coloring out of a $p$-coloring that we did in the proof of Proposition \ref{prop:detcol} proves the following statement.

\begin{lemma}\label{lem:pcolrcol} We keep the notation from Proposition \ref{prop:detcol}. For any positive integer $r_0>1$ such that  $r_0\mid (r, \det L )$, a non-trivial $r_0$-coloring of $L$ gives rise to a non-trivial $r$-coloring of $L$ with the same number of colors, by multiplying each color of the non-trivial $r_0$-coloring by $\frac{r}{r_0}$, and reading them and the equations at the crossings mod $r$.

In particular,  $mincol\sb{r}L \leq mincol\sb{r_0}L$.

\end{lemma}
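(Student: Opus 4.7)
The plan is to repeat verbatim the construction carried out in the second half of the proof of Proposition \ref{prop:detcol}, observing that the only property of the prime $p$ used there was $p \mid (r, \det L)$; the argument goes through unchanged for any positive divisor $r_0 > 1$ of $(r, \det L)$.

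First, I would fix a diagram $D$ of $L$ endowed with a non-trivial $r_0$-coloring, representing each color as the unique integer $c_i$ with $0 \le c_i < r_0$. The coloring condition at every crossing reads $2 c_{i_1} - c_{i_2} - c_{i_3} = r_0\, l_{i_1,i_2,i_3}$ for some integer $l_{i_1,i_2,i_3}$. Since $r_0 \mid r$, multiplying each such equation by the positive integer $r/r_0$ yields $2(\tfrac{r}{r_0}c_{i_1}) - (\tfrac{r}{r_0}c_{i_2}) - (\tfrac{r}{r_0}c_{i_3}) = r\, l_{i_1,i_2,i_3}$, so the assignment $(\tfrac{r}{r_0} c_i)$ satisfies the coloring condition mod $r$ and is therefore an $r$-coloring of $D$.

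Next, I would check that this new $r$-coloring uses exactly the same number of distinct colors as the original $r_0$-coloring. This reduces to showing that the map $c \mapsto \tfrac{r}{r_0} c \pmod r$ is injective on $\{0, 1, \dots, r_0 - 1\}$. If $c_i \ne c_j$ with both in $\{0, \dots, r_0 - 1\}$, then $0 < |\tfrac{r}{r_0}(c_i - c_j)| < \tfrac{r}{r_0}\cdot r_0 = r$, so $\tfrac{r}{r_0} c_i \not\equiv \tfrac{r}{r_0} c_j \pmod r$. In particular, since the original coloring uses at least two colors, so does the new one, making it non-trivial.

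Finally, the inequality $mincol_r L \le mincol_{r_0} L$ drops out by choosing, for each diagram $D$ of $L$, an $r_0$-coloring of $D$ realizing the minimum $n_{r_0, D}$ and applying the construction above to produce an $r$-coloring of the same $D$ with the same number of colors, whence $n_{r, D} \le n_{r_0, D}$, and taking the minimum over diagrams. There is no real obstacle here: the only point that requires a moment's care is the injectivity estimate for the scaling map, and the statement of the lemma is really just an observation that the proof of Proposition \ref{prop:detcol} already establishes it.
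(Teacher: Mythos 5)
Your proposal is correct and follows exactly the route the paper takes: the paper simply observes that the construction in the second half of the proof of Proposition \ref{prop:detcol} carries over verbatim with the prime $p$ replaced by any divisor $r_0>1$ of $(r,\det L)$. Your injectivity check for the scaling map $c\mapsto \frac{r}{r_0}c \pmod r$ is the same estimate the paper uses (there only spelled out for two distinct colors), so nothing essential differs.
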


\bigbreak

\subsection{Minimum number of colors: $2$ and $3$}\label{subsect:min23}

\noindent

We now prove a result which can be found in \cite{kl}. We state and prove it here for completeness.

\begin{proposition}\label{prop:2and3}
Let $L$ be a non-split link and $r$ be an integer greater than $1$. Assume further $L$ admits non-trivial $r$-colorings.
\begin{enumerate}
\item If $mincol\sb{r}L = 2$ then $2|r$

\item If $mincol\sb{r}L = 3$ then $3|r$
\end{enumerate}
\end{proposition}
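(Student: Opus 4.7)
For (1), the plan is to exploit the coloring condition $2x \equiv y+z \pmod{r}$ at each crossing, given that only two colors $a, b$ with $a \not\equiv b \pmod r$ appear. A short case analysis shows that if the two under-arcs at some crossing had different colors, the equation together with $x \in \{a,b\}$ would force $a \equiv b$, a contradiction. Hence at every crossing the two under-arcs share a color, making each component of $L$ monochromatic. Since $L$ is non-split, the graph whose vertices are the components and whose edges are the crossings between distinct components is connected, so there is a crossing between a color-$a$ component and a color-$b$ component. At that crossing the equation reduces to $2(x-y) \equiv 0 \pmod r$ with $x - y \equiv \pm(a-b) \not\equiv 0$, forcing $2 \mid r$. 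No serious obstacle is anticipated here.

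For (2), the strategy is parallel but requires a finer classification of crossings into monochromatic, bi-chromatic (over one color, both unders equal to a second), and rainbow (all three colors appear). A rainbow crossing with over-color $x$ satisfies $3x \equiv a+b+c \pmod r$, so if two rainbows have distinct over-colors $x \neq x'$, then $3(x-x') \equiv 0$ with $x-x' \not\equiv 0$, and $3 \mid r$ follows directly. The ``no rainbow'' subcase makes every component monochromatic as in~(1); the non-split hypothesis then furnishes crossings among at least two of the pairs $\{a,b\}, \{b,c\}, \{a,c\}$, giving constraints $2(a-b) \equiv 0$ and (say) $2(b-c) \equiv 0$, which force $a - b \equiv b - c \equiv r/2$ and hence $a \equiv c \pmod r$, contradicting that the three colors are distinct.

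It remains to treat the case in which all rainbow crossings share a common over-color $m$, so that the three colors form an arithmetic progression $\{m-t, m, m+t\}$. A further case analysis of bi-chromatic crossings yields either $2t \equiv 0 \pmod r$ (which collapses $m+t$ and $m-t$, contradicting three distinct colors) or $4t \equiv 0$ with $2t \not\equiv 0$; in the latter situation the doubled assignment $c \mapsto 2c$, which is again a coloring, sends the palette to $\{2m,\, 2m+2t,\, 2m-2t\} = \{2m,\, 2m+2t\}$, producing a non-trivial $2$-coloring of $D$ and contradicting $mincol_r L = 3$. The main obstacle is the residual configuration in which only monochromatic and rainbow-over-$m$ crossings appear. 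Here my plan is to track the coloring around each link component: any component visiting color $m+t$ or $m-t$ must alternate between the two at every rainbow under-crossing, forcing an even number of such under-crossings on each such component. Combining this parity constraint with the non-split hypothesis (which forces enough rainbow crossings between the $m$-colored components and the alternating components) should either produce a contradiction directly or reveal that the palette $\{m-t, m, m+t\}$ closes up as a three-element sub-quandle of $(\mathbb Z/r, \ast)$ with $x \ast y = 2y - x$; since any such sub-quandle must satisfy $b+c \equiv 2a$, $a+c \equiv 2b$ and $a+b \equiv 2c \pmod r$, subtracting two of these relations yields $3(b-c) \equiv 0$ with $b-c \not\equiv 0$, forcing $3 \mid r$.
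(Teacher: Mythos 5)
Part (1) of your proposal is correct and is essentially the paper's own argument, just reorganized: the observation that both under-arcs at any crossing must agree makes every component monochromatic, and the non-split hypothesis then produces a crossing reading $2(a-b)\equiv 0 \pmod r$ with $a\not\equiv b$, whence $2\mid r$. Your cases A and B for part (2), and the two bi-chromatic sub-cases of C, are also sound (the doubling trick in the $4t\equiv 0$, $2t\not\equiv 0$ sub-case is a nice touch).

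The genuine gap is exactly the configuration you flag as ``the main obstacle'': palette $\{m-t,\,m,\,m+t\}$ with every crossing either monochromatic or rainbow with over-color $m$. Neither of your proposed closings can work there. The only non-monochromatic relation ever read off a crossing in that configuration is $2m=(m+t)+(m-t)$, which holds identically over $\mathbb{Z}$ and so imposes no divisibility constraint on $r$ at all; in particular the sub-quandle relations $2(m+t)\equiv m+(m-t)$ and $2(m-t)\equiv m+(m+t)$ are never forced, because no crossing has over-color $m\pm t$ with distinct under-colors, so you cannot extract $3(b-c)\equiv 0$. The parity count is likewise vacuous: each $(m\pm t)$-component having an even number of rainbow under-crossings is automatically satisfiable and yields no contradiction. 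What actually kills this case is topological rather than algebraic. Since there are no bi-chromatic crossings, any component containing an arc colored $m$ is entirely colored $m$ (an $m$-arc can only pass under a monochromatic crossing), and every crossing between an $m$-colored component and an $(m\pm t)$-colored component must be a rainbow crossing, at which the $m$-strand is the over-arc; a crossing with the $(m\pm t)$-strand on top would be bi-chromatic. Hence the (nonempty) sublink of $m$-colored components lies entirely above the (nonempty) rest and can be lifted off, splitting $L$ and contradicting the hypothesis. Some argument of this kind is unavoidable: were the residual configuration realizable on a non-split link, it would furnish a non-trivial $r$-coloring with the three colors $-1,0,1$ for every $r\geq 3$, falsifying the proposition. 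This is precisely the point the paper's proof is making (tersely) when it asserts that, because $L$ is non-split, the over-arc bearing $b$ must eventually become an under-arc at a crossing where all three colors meet.
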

\begin{proof}
\begin{enumerate}
\item If $mincol\sb{r}L = 2$ then there is a diagram, $D$, of $L$ endowed with a non-trivial $r$-coloring which uses two distinct colors, call them $a$ and $b$, with $0\leq a < b <r$ (without loss of generality). At a crossing of this diagram, these two colors have to meet. The possibilities are:
    \begin{enumerate}
\item One of the under-arcs bears $a$, the other under-arc and the over-arc bear $b$. The equation read at the crossing yields $2b=\sb{r}a+b$ which amounts to $a=\sb{r}b$, which is absurd.
\item The under-arcs bear $a$ and the over-arc bears $b$. The equation is then $2b=\sb{r}a+a$ which amounts to $2(b-a)=\sb{r}0$. Since $0 < b-a < r$, then $0<2(b-a)< 2r$ so $2(b-a)=r$ i.e., $2|r$. In particular, $b=a+\frac{r}{2}$ (this will be useful when analyzing the next case).
\end{enumerate}

\item If $mincol\sb{r}L = 3$ then there is a diagram, $D$, of $L$ endowed with a non-trivial $r$-coloring which uses three distinct colors, call them $a, b$ and $c$, all of them chosen between $0$ and $r-1$. Suppose that, at crossings where more than one color meet, only two colors meet. Then, without loss of generality, $2(b-a)=\sb{r}0$ and $2(c-a)=\sb{r}0$ which imply $b= a+\frac{r}{2}= c $, conflicting with the assumption of three distinct colors. Therefore at some crossing the three distinct colors meet yielding $c=\sb{r}2b-a$.
\begin{figure}[!ht]
    \psfrag{0}{\huge $a$}
    \psfrag{1}{\huge $b$}
    \psfrag{2c}{\huge $c$}
    \psfrag{c'}{\huge $x$}
    \psfrag{2c'-1}{\huge $y$}
    \centerline{\scalebox{.50}{\includegraphics{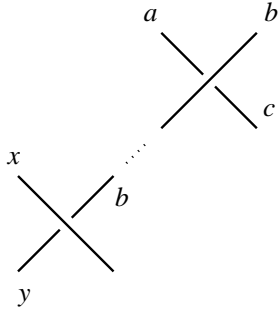}}}
    \caption{The three distinct colors $a, b, c$ meeting at the upper crossing}\label{fig:2x2}
\end{figure}\\
Since the link is non-split, then the over-arc bearing $b$ has to come to a crossing with respect to which it is an under-arc and the colors meeting are distinct in two's. Perhaps this over-arc bearing $b$ first encountered crossings where there was only the color $b$ meeting, but after a fashion an over-arc bearing the color $b$ becomes an under-arc at a crossing where the three colors are distinct (see bottom left portion of Figure \ref{fig:2x2}). Then either $(x, y)=(a, c)$ in which case $c=\sb{r}2a-b$ which in conjunction with $c=\sb{r}2b-a$ yields $3(b-a)=\sb{r}0$ which in turn yields $3|r$. Or $(x, y)=(c, a)$ leading to the same result. This concludes the proof.
\end{enumerate}
\end{proof}

\bigbreak

\subsection{A variation on a result of Saito's}\label{subsect:saitoproof}

\noindent

In this Subsection we prove the following variation of  a result  of Saito's (\cite{Saito}). The proof we give is simpler under the hypothesis of $\det L\neq 0$ and we include it here for completeness.

\bigbreak

\begin{theorem}\label{thm:extsaito} Let $L$ be a link with $\det L \neq 0$. If there is a prime $p>7$ such that $L$ admits non-trivial $p$-colorings, then $mincol\sb{q}L \geq 5$ for any prime $q>7$ for which $L$ admits non-trivial $q$-colorings.
\end{theorem}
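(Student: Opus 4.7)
The plan is to proceed by contradiction. Suppose that, for some prime $q > 7$ with $L$ admitting non-trivial $q$-colorings, one has $mincol_q L \leq 4$. The cases $mincol_q L \in \{2, 3\}$ are excluded at once by Proposition \ref{prop:2and3}, since they would force $2 \mid q$ or $3 \mid q$, impossible for a prime $q > 7$. So the substantive case is $mincol_q L = 4$.

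Fix a diagram $D$ of $L$ bearing a non-trivial $q$-coloring that uses exactly four colors. After translating the palette by a constant (which preserves the coloring), I may assume it is $\{0, a, b, c\}$ with $a, b, c$ distinct and nonzero modulo $q$. Since $q$ is an odd prime, any non-monochromatic crossing of $D$ must carry three distinct arc colors: if the two under-arcs shared a color $x$ and the over-arc were $y$, the relation $2(y - x) \equiv 0 \pmod{q}$ would force $y = x$. Consequently, every such ``tricolor'' crossing produces a three-term arithmetic progression $\{x, y, z\} \subseteq \{0, a, b, c\}$ with $x + z \equiv 2y \pmod{q}$. Next, I would invoke $\det L \neq 0$, which forces $L$ to be non-split and hence $D$ to be connected, in order to analyze the graph $G$ on the four colors whose edges record co-occurrence at tricolor crossings. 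If $G$ were disconnected, the arcs of $D$ would partition into two sets sharing no crossing (since bi-color crossings are excluded and monochromatic crossings involve a single color), exhibiting $D$ as a split diagram and contradicting non-splitness of $L$. Hence $G$ is connected on four vertices, which requires at least two distinct tricolor crossings whose AP-triples jointly cover all four colors.

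The final step would be to enumerate the finitely many configurations of $3$-APs on a 4-element palette that yield a connected $G$, and to verify in each case that the resulting equations on $a, b, c$ together with the distinctness of the four residues modulo $q$ force $q$ to divide a small integer, thereby bounding $q \leq 7$ and producing the desired contradiction. The main obstacle lies precisely in this combinatorial/algebraic case analysis: one must check, pattern by pattern, that no connected arrangement of $3$-APs on four residues modulo $q$ can be sustained for a prime $q > 7$. The hypothesis $\det L \neq 0$ is what simplifies Saito's original argument: it removes split-link degeneracies and allows the connectedness of $G$ to be asserted immediately, so only the non-split enumeration needs to be carried out.
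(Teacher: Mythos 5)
Your setup is sound up to the last step: for an odd prime $q$ every non-monochromatic crossing indeed carries three distinct colors forming a $3$-term arithmetic progression, and the connectedness of the colour co-occurrence graph (via non-splitness, which does follow from $\det L\neq 0$) correctly forces at least two distinct AP-triples jointly covering the four colours. But the step you defer to — ``verify in each case that the resulting equations \ldots force $q$ to divide a small integer, thereby bounding $q\leq 7$'' — is false, and this is a genuine gap rather than a routine verification. Take the palette $\{0,a,b,c\}$ with the two APs $0+b=2a$ and $a+c=2b$: after rescaling this is $\{0,1,2,3\}$, four residues that are distinct modulo every prime $q>3$, and all the local coloring conditions are satisfied over the integers. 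No contradiction with $q>7$ can be extracted from the local equations alone; the paper's own case analysis shows that exactly such palettes ($\{0,1,2,3\}$, $\{0,1,2,4\}$, $\{0,2,3,4\}$) survive.

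The missing idea is how $\det L\neq 0$ is actually used. In the paper it is not (only) a non-splitness hypothesis: the point is that in each surviving case the admissible coloring conditions at every crossing hold \emph{over the integers}, hence the same diagram carries a non-trivial coloring modulo \emph{every} prime. By Proposition \ref{prop:detcol}, non-trivial $q'$-colorings exist only for primes $q'$ dividing $\det L$, and since $\det L\neq 0$ there are only finitely many such primes — that is the contradiction. Your proposal uses $\det L\neq 0$ only to assert connectedness and then tries to close the argument purely locally, which cannot work. To repair it you would need to replace the final enumeration-to-a-small-modulus step by this global ``infinitely many primes'' argument (or some equivalent), at which point you essentially recover the paper's proof.
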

\begin{proof} This proof is adapted from  Saito's proof of Theorem 1.1 (1) in \cite{Saito}. It differs from it on the handling of two cases and on the final use of $\det L \neq 0$.

Let $L$ and $p$ be as in the statement. Since the minimum number of colors cannot be $2$ nor $3$ (for $2\nmid p$, and $3\nmid p$,  see Proposition \ref{prop:2and3}), assume there is a diagram $D$ of $L$ endowed with a non-trivial $p$-coloring and using four distinct colors. Arguing as in the proof of Proposition \ref{prop:2and3}, there is a crossing of this diagram where three distinct colors meet. We recall that colorings are solutions of linear systems of equations. Therefore, linear combinations of solutions are solutions. Thus, subtracting the standing non-trivial $p$-coloring by the appropriate trivial $p$-coloring, we obtain a new non-trivial $p$-coloring using four colors and such that, at the indicated crossing, one of the under-arcs bears color $0$. Assume now the over-arc at this crossing bears color $1$ (otherwise, divide mod $p$ each color by this one). Then the other under-arc at this crossing bears color $2$. There is only one color left to be determined. Arguing as in the proof of Proposition \ref{prop:2and3}, the arc bearing $1$ or another arc stemming from it and also bearing color $1$, will become an under-arc at a crossing where all the three colors meeting are distinct. Now these three colors cannot be $0, 1, 2$ for otherwise $3\mid p$ which contradicts the hypothesis. Hence, at this new crossing the fourth color has to be involved. Here are the possibilities.

The over-arc cannot bear the color $1$. Suppose the over-arc bears $0$, in which case the other under-arc bears $-1$. Upon adding the trivial coloring formed by the color $1$, we can assume the non-trivial coloring here to be formed by $0, 1, 2, 3$.

Now assume the over-arc bears $2$. Then the other under-arc bears color $3$ and the  four colors at issue are $0, 1, 2, 3$.

Finally, assume the over-arc bears the yet unknown fourth color, call it $x$. The possibilities for the remaining under-arc are $0$ or $2$. Suppose this under-arcs bears $0$. Then, $2x=\sb{p}1$ which upon multiplying each color by $2$ (i.e., adding this solution to itself) amounts to the following four colors: $0, 1, 2, 4$. Suppose the under-arc bears color $2$. Then, $2x=\sb{p}1+2=3$ and upon multiplication by $2$ we can assume the four colors to be $0, 2, 3, 4$.

Therefore, we are left with the following three possibilities for the four colors:
\[
0, 1, 2, 3 \qquad \qquad 0, 1, 2, 4  \qquad \qquad 0, 2, 3, 4
\]

Let us consider the first set of colors, $\{  0, 1, 2, 3  \}$. What are the possibilities for the coloring condition at a generic crossing of the diagram with these four colors besides the monochromatic possibility (over-arc and under-arcs receive the same color)? $3$ cannot be the color of an over-arc at a polichromatic crossing. $2$ can be the color of an over-arc at a polichromatic crossing if the under-arcs receive colors $3$ and $1$. Likewise $1$ can be the color of an over-arc at a polichromatic crossing if the under-arcs receive colors $0$ and $2$. $0$ cannot be the color of an over-arc at a polichromatic crossing. Furthermore, we realize that the possible coloring conditions just listed hold over the integers and hence hold modulo any prime $q>7$. Had we started with the sets of colors $\{  0, 1, 2, 4  \}$ and $\{  0,  2, 3, 4  \}$ we would have arrived at the same conclusions about the possible coloring conditions. They would have hold over the integers. We can then conclude that there is a non-trivial $q$-coloring of the link $L$ for infinitely many primes $q$. But since $\det L \neq 0$, then only a finite number of primes divide $\det L$ i.e., only a finite number of primes $q$ may give rise to non-trivial $q$-colorings of $L$. This contradiction concludes the proof.
\end{proof}

\bigbreak

\section{Proof of Theorem \ref{thm:exactmin}}\label{sect:proof}

\noindent

Having established the facts of Section \ref{sect:class}, the proof of Theorem \ref{thm:exactmin} is now a non-trivial application of combinatorics and number theory to them and to Lemma \ref{lem:mainlemma}. We thus start this Section by proving Lemma \ref{lem:mainlemma}.

\bigbreak

\begin{proof}[Proof of Lemma \ref{lem:mainlemma}] If $r$ is prime then $p=r$.

If $r$ is composite, we prove the lemma  in $3$ steps, as follows.

\begin{enumerate}
\item  Assume $r$ is composite and $mincol\sb{r}L = n$. Without loss of generality, suppose this minimum number of colors is realized over a diagram $D$ of $L$ with colors $0 = c\sb{0} < c\sb{1}<\dots <c\sb{n-1}<r$ (mod $r$).

\item Let $g$ be the greatest common divisor of $r, 0, c\sb{1}, \dots , c\sb{n-1}$. Note that $g<r$ for otherwise the coloring would be trivial. If $g=1$ go to $\rm 3.$. Otherwise, consider the set of triples $(i\sb{1}, i\sb{2}, i\sb{3})$ such that
\[
2c\sb{i\sb{1}}- c\sb{i\sb{2}} - c\sb{i\sb{3}} = l\sb{i\sb{1}, i\sb{2}, i\sb{3}}r
\]
holds at a crossing of $D$, where $l\sb{i\sb{1}, i\sb{2}, i\sb{3}}$ is an integer. Dividing throughout by $g$, we obtain:
\[
2\frac{c\sb{i\sb{1}}}{g}- \frac{c\sb{i\sb{2}}}{g} - \frac{c\sb{i\sb{3}}}{g} = l\sb{i\sb{1}, i\sb{2}, i\sb{3}}\frac{r}{g}
\]
Moreover, with integer $l\sb{i, j}$
\[
c\sb{i}-c\sb{j}=l\sb{i, j}r  \qquad \text{ is equivalent to saying } \qquad \frac{c\sb{i}}{g}-\frac{c\sb{j}}{g}=l\sb{i, j}\frac{r}{g}
\]
which means there is an induced non-trivial $\frac{r}{g}$-coloring of $D$ mod $\frac{r}{g}$ using colors $0< \frac{c\sb{1}}{g}< \dots < \frac{c\sb{n-1}}{g} \, \big( <\frac{r}{g}  \big)$ and
\[
n = mincol\sb{r}L \leq mincol\sb{\frac{r}{g}}L \leq n
\]
using Lemma \ref{lem:pcolrcol} for the middle inequality. Thus, $n = mincol\sb{r}L = mincol\sb{\frac{r}{g}}L$.

\item For any prime $p\mid \frac{r}{g}$, say $\frac{r}{g}=r\sb{0}p$, we have
\[
2\frac{c\sb{i\sb{1}}}{g}- \frac{c\sb{i\sb{2}}}{g} - \frac{c\sb{i\sb{3}}}{g} = l\sb{i\sb{1}, i\sb{2}, i\sb{3}}\frac{r}{g} = l\sb{i\sb{1}, i\sb{2}, i\sb{3}}r\sb{0}p
\]
so we have an induced $p$-coloring of $D$, with colors $\frac{c\sb{i}}{g}$ read mod $p$. Suppose they are all equal mod $p$, so, necessarily all equal to $0$ mod $p$:
\[
0=\frac{c\sb{i}}{g}=l\sb{i}p \qquad \text{ for } i=1, \dots , n-1 \quad \text{ and integer } l\sb{i}
\]
Then, $p$ divides $\frac{r}{g}$ and each $\frac{c\sb{i}}{g}, i=1, \dots , n-1$. But this contradicts the standing assumption that $g$ is the greatest common divisor of $r, 0, c\sb{1}, \dots , c\sb{n-1}$. Hence, at least two of the colors $\frac{c\sb{i}}{g}, i=0, \dots , n-1$ are distinct, mod $p$ ($\frac{c\sb{0}}{g}=0$) and so all of them constitute a non-trivial $p$-coloring of $D$. Let us denote these colors $c\sb{i}\sp{p}$, with
\[
0 = c\sb{0}\sp{p} \leq  c\sb{1}\sp{p}\leq c\sb{2}\sp{p}\leq  \dots \leq c\sb{n-1}\sp{p}
\]
so $mincol\sb{p}L\leq n$.
Using Lemma \ref{lem:pcolrcol} and arguing as in the end of $\rm 2.$, we obtain $mincol\sb{p}L=mincol\sb{r}L$, concluding the proof.
\end{enumerate}
\end{proof}

\bigbreak

\begin{corollary}\label{cor:2}
\[
mincol\sb{r}L = \min \{ \, mincol\sb{p}L \, | \, p \, \text{ is prime and }  p\mid (r, \det L )   \}
\]
\end{corollary}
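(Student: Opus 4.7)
The statement is a direct combination of Lemma \ref{lem:pcolrcol} (which supplies the inequality $mincol_r L \leq mincol_p L$ for every prime $p\mid(r,\det L)$) and Lemma \ref{lem:mainlemma} (which supplies a matching lower bound). The plan is therefore to verify both inequalities and to address the hypotheses under which the identity is meaningful.

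First I would note the implicit assumption: for the right-hand side to be non-empty one needs $(r,\det L)\neq 1$, which by Proposition \ref{prop:detcol} is exactly the condition that $L$ admits non-trivial $r$-colorings, and $L$ should be non-split (which is automatic from $\det L\neq 0$). Under these hypotheses, for every prime $p$ dividing $(r,\det L)$ Lemma \ref{lem:pcolrcol} (applied with $r_0=p$) gives
\[
mincol_r L \;\leq\; mincol_p L,
\]
so taking the minimum over such primes yields
\[
mincol_r L \;\leq\; \min\{\, mincol_p L \,\mid\, p \text{ prime}, \; p\mid (r,\det L)\,\}.
\]

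For the reverse inequality I would invoke Lemma \ref{lem:mainlemma}: since $L$ is non-split and admits non-trivial $r$-colorings, there exists a prime $p_0\mid (r,\det L)$ with $mincol_r L = mincol_{p_0} L$. In particular
\[
mincol_r L \;=\; mincol_{p_0} L \;\geq\; \min\{\, mincol_p L \,\mid\, p \text{ prime}, \; p\mid (r,\det L)\,\}.
\]
Combining the two inequalities gives the desired equality.

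Since both inequalities have already been packaged into the preceding lemmas, there is essentially no real obstacle: the only point that requires a moment of care is the scope (non-split link admitting non-trivial $r$-colorings), which is precisely the hypothesis making the right-hand minimum a minimum over a non-empty set. Once this is acknowledged, the proof reduces to chaining the two lemmas in the order above.
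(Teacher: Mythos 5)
Your proof is correct and is exactly the argument the paper intends: the paper's proof of this corollary is the one-line remark that it "follows from applying Lemmas \ref{lem:mainlemma} and \ref{lem:pcolrcol}," and you have simply spelled out the two inequalities (upper bound from Lemma \ref{lem:pcolrcol}, matching lower bound from Lemma \ref{lem:mainlemma}) together with the non-emptiness hypothesis. No further comment is needed.
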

\begin{proof} This follows from applying Lemmas \ref{lem:mainlemma} and \ref{lem:pcolrcol}.
\end{proof}

\bigbreak

\begin{proposition}\label{prop:2}Let $L$ be a link. If
$\langle r, \det L \rangle = 2$  then $mincol\sb{r}L = 2$.

Let $L$ be a non-split link. If $mincol\sb{r}L = 2$ then $\langle r, \det L \rangle = 2$.
\end{proposition}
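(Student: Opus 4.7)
The plan is to prove the two implications separately, both reducing to combinations of Proposition \ref{prop:detcol}, Proposition \ref{prop:2and3}, Lemma \ref{lem:pcolrcol}, and Lemma \ref{lem:mainlemma}.

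For the forward direction, suppose $\langle r, \det L\rangle = 2$, so $2\mid r$ and $2\mid \det L$. First I would invoke Proposition \ref{prop:detcol} with modulus $2$ to conclude that $L$ admits non-trivial $2$-colorings. Since the only residues modulo $2$ are $0$ and $1$, any non-trivial $2$-coloring uses exactly two colors, so $mincol_2 L = 2$. Then Lemma \ref{lem:pcolrcol} applied with $r_0 = 2$ (which divides $(r, \det L)$) yields $mincol_r L \leq mincol_2 L = 2$. Since any non-trivial coloring requires at least two colors, $mincol_r L = 2$.

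For the reverse direction, assume $L$ is non-split and $mincol_r L = 2$. The existence of the minimum means $L$ admits non-trivial $r$-colorings, so by Proposition \ref{prop:detcol} we have $(r, \det L) \neq 1$ and the hypothesis of Lemma \ref{lem:mainlemma} is satisfied. The lemma then furnishes a prime $p \mid (r, \det L)$ with $mincol_p L = mincol_r L = 2$. Applying Proposition \ref{prop:2and3}(1) (whose hypotheses are met because $L$ is non-split and admits non-trivial $p$-colorings) forces $2 \mid p$, whence $p = 2$. Thus $2\mid (r,\det L)$, and being the smallest possible prime, it must equal $\langle r, \det L\rangle$.

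No step is really an obstacle here: once Lemma \ref{lem:mainlemma} and Lemma \ref{lem:pcolrcol} are in hand, the two directions are essentially bookkeeping. The only subtle point worth flagging is that the reverse direction truly needs Lemma \ref{lem:mainlemma}: Proposition \ref{prop:2and3}(1) by itself would only tell us $2 \mid r$, not $2 \mid \det L$, so without the lemma we could not rule out the possibility that $2 \mid r$ while $\langle r, \det L\rangle$ is some larger odd prime. The non-split hypothesis is used precisely at this juncture, both to apply Proposition \ref{prop:2and3} at modulus $p$ and to invoke Lemma \ref{lem:mainlemma}.
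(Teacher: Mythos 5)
Your proof is correct and follows essentially the same route as the paper's: the forward direction via Proposition \ref{prop:detcol} at modulus $2$ plus Lemma \ref{lem:pcolrcol}, and the reverse direction via Lemma \ref{lem:mainlemma} followed by Proposition \ref{prop:2and3}(1) to pin down $p=2$. Your explicit remark that any non-trivial $2$-coloring automatically uses exactly two colors is a small clarification the paper leaves implicit, but the argument is the same.
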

\begin{proof}If $\langle r, \det L \rangle = 2$, then $2\mid r$ and $2\mid \det L$. By Lemma \ref{lem:pcolrcol} there is a non-trivial $r$-coloring of $L$ with $2$ colors. Since a non-trivial coloring has to use at least $2$ distinct colors then $mincol\sb{r}L=2$.

Now assume $mincol\sb{r}L = 2$ with $L$ a non-split link. Then, there is a non-trivial $r$-coloring of $L$ with $2$ colors. By Lemma \ref{lem:mainlemma} there is a prime $p\mid (r, \det L )$ such that $mincol\sb{p}L=mincol\sb{r}L=2$. If $L$ admits non-trivial $2$-colorings, then $mincol\sb{2}L=2$. We are going to show that $2$ is the sole prime with this property. Suppose $p$ is prime and there is a non-trivial $p$-coloring over diagram $D$ of $L$ which uses two distinct colors. Then by $1$. of Proposition \ref{prop:2and3}, $2\mid p$ and since $p$ is prime then $p=2$. By Proposition \ref{prop:detcol} $(p, \det L )\neq 1$ and since $p=2$ is prime, $\langle p, \det L \rangle = 2$, which completes the proof.
\end{proof}

\bigbreak

\begin{proposition}\label{prop:3} Let $L$ be a non-split link. If
$\langle r, \det L \rangle = 3$,  then $mincol\sb{r}L = 3$.

If $L$ is a link with $mincol\sb{r}L = 3$, then $\langle r, \det L \rangle = 3$.
\end{proposition}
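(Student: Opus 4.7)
The plan is to prove the two implications by routine application of the earlier lemmas, principally Lemma~\ref{lem:mainlemma} and Lemma~\ref{lem:pcolrcol}, together with Propositions~\ref{prop:detcol}, \ref{prop:2and3}, and \ref{prop:2}. Throughout I unpack the condition $\langle r, \det L \rangle = 3$ as the conjunction ``$3\mid (r, \det L)$ and $2\nmid (r, \det L)$'', which is what the two directions respectively produce and consume.

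For the forward direction I would assume $\langle r, \det L \rangle = 3$. Since $3\mid (r, \det L)$, Proposition~\ref{prop:detcol} supplies a non-trivial $3$-coloring of some diagram of $L$; such a coloring uses at most three distinct colors, so Lemma~\ref{lem:pcolrcol} (with $r\sb{0}=3$) transports it to a non-trivial $r$-coloring with the same number of colors, giving $mincol\sb{r}L\leq 3$. To rule out $mincol\sb{r}L=2$ I would invoke the converse part of Proposition~\ref{prop:2}: because $L$ is non-split, $mincol\sb{r}L=2$ would force $\langle r,\det L\rangle = 2$, contrary to hypothesis. Hence $mincol\sb{r}L = 3$.

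For the reverse direction I would assume $mincol\sb{r}L=3$. A split link has $mincol\sb{r}L=2$, so $L$ must automatically be non-split, and Lemma~\ref{lem:mainlemma} then produces a prime $p\mid (r, \det L)$ with $mincol\sb{p}L = mincol\sb{r}L = 3$. Part~(2) of Proposition~\ref{prop:2and3}, applied with modulus $p$, forces $3\mid p$, so $p=3$; in particular $3\mid (r, \det L)$. It remains to check $2\nmid (r, \det L)$: otherwise Lemma~\ref{lem:pcolrcol} with $r\sb{0}=2$ would give a non-trivial $r$-coloring of $L$ using only two colors, contradicting $mincol\sb{r}L = 3$. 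Combined with $3\mid (r, \det L)$, this yields $\langle r, \det L\rangle = 3$.

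There is no substantial obstacle beyond careful bookkeeping of the notation $\langle \cdot , \cdot \rangle$: each direction splits into an upper-bound step (produced by Lemma~\ref{lem:pcolrcol}) and a lower-bound step (obtained from Lemma~\ref{lem:mainlemma} together with Proposition~\ref{prop:2and3}), and these must then be combined to pin $\langle r, \det L\rangle$ to the single prime $3$. The only place where one might slip is forgetting that the ``only if'' part silently needs $L$ to be non-split in order to apply Lemma~\ref{lem:mainlemma}; this is recovered for free from the hypothesis $mincol\sb{r}L=3$, since split links have minimum $2$.
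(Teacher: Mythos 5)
Your proof is correct and follows essentially the same route as the paper: Lemma~\ref{lem:pcolrcol} together with Proposition~\ref{prop:2} for the forward direction, and Lemma~\ref{lem:mainlemma} together with part~(2) of Proposition~\ref{prop:2and3} for the converse. You are in fact slightly more careful than the paper at the very end, where you explicitly rule out $2\mid (r,\det L)$ in order to pin down $\langle r,\det L\rangle = 3$; the paper's proof stops at the conclusion $\langle p,\det L\rangle = 3$ for the prime $p=3$ it found and leaves that last step implicit.
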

\begin{proof}Let $L$ be a non-split link with $\langle r, \det L \rangle = 3$. Then $3\mid r$ and $3\mid \det L$. By Lemma \ref{lem:pcolrcol} there is a non-trivial $r$-coloring of $L$ with $3$ colors. It cannot use less than $3$ colors for otherwise by Proposition \ref{prop:2}, $\langle r, \det L \rangle = 2$, which conflicts with the hypothesis. Then $mincol\sb{r}L=3$.

Now assume $mincol\sb{r}L = 3$. Then, there is a non-trivial $r$-coloring of $L$ with $3$ colors and $L$ is necessarily a non-split link. By Lemma \ref{lem:mainlemma} there is a prime $p\mid (r, \det L )$ such that $mincol\sb{p}L=mincol\sb{r}L=3$. By $2$. of Proposition \ref{prop:2and3}, if $mincol\sb{p}L=3$, then $3\mid p$ and since $p$ is prime then $p=3$. By Proposition \ref{prop:detcol} $(p, \det L)\neq 1$ and since $p=3$ is prime then $\langle p, \det L\rangle =3$ which concludes the proof.
\end{proof}

\begin{proposition}\label{prop:5,7} Let $L$ be a non-split link.  If $\langle r, \det L \rangle \in \{ 5, 7 \}$ then $mincol\sb{r}L = 4$.

Let $L$ be a link with $\det L\neq 0$. If $mincol\sb{r}L = 4$ then $\langle r, \det L \rangle \in \{ 5, 7 \}$
\end{proposition}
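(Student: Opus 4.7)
The plan is to handle the two implications separately. For the ``if'' direction, suppose $L$ is non-split with $\langle r, \det L \rangle = p \in \{5, 7\}$. Since $p \mid (r, \det L)$, Proposition \ref{prop:detcol} guarantees that non-trivial $r$-colorings of $L$ exist, so $mincol_{r}L$ is finite. The lower bound $mincol_{r}L \geq 4$ is immediate from the contrapositives of the second assertions of Propositions \ref{prop:2} and \ref{prop:3}: if $mincol_{r}L$ were $2$ or $3$, then $\langle r, \det L \rangle$ would be $2$ or $3$, contradicting the hypothesis. For the upper bound $mincol_{r}L \leq 4$, Lemma \ref{lem:pcolrcol} reduces the problem to producing a non-trivial $p$-coloring of $L$ using exactly $4$ colors; multiplying each color by $r/p$ then lifts it to an $r$-coloring with the same palette size.

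The construction of a $4$-color $p$-coloring is the real content of this half. I would begin with any non-trivial $p$-coloring of a diagram $D$ of $L$ (afforded by Proposition \ref{prop:detcol}) and perform a sequence of Reidemeister moves that locally replaces each arc whose color lies outside a chosen $4$-element target set by a small tangle all of whose arcs carry colors from that set, following the template of Satoh's argument for $p=5$ and the analogous 7-coloring replacements developed in \cite{Saito, satoh, Oshiro, kl}. The verification is purely local: at each new crossing one checks that the equation $2b =_{p} a + c$ holds with values drawn only from the chosen $4$-element subset of $\mathbb{Z}/p\mathbb{Z}$, and that at least two distinct colors remain in use so the coloring stays non-trivial.

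For the ``only if'' direction, suppose $\det L \neq 0$ and $mincol_{r}L = 4$. A non-trivial $r$-coloring exists, so Proposition \ref{prop:detcol} gives $(r, \det L) \neq 1$, and Lemma \ref{lem:mainlemma} supplies a prime $p \mid (r, \det L)$ with $mincol_{p}L = mincol_{r}L = 4$. Proposition \ref{prop:2and3} rules out $p=2$ and $p=3$, while Theorem \ref{thm:extsaito} applied with $q=p$ rules out $p > 7$, since in that case $L$ admits non-trivial $p$-colorings and the theorem would force $mincol_{p}L \geq 5$. Hence $p \in \{5, 7\}$, and in particular $\langle r, \det L \rangle \leq 7$. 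Finally, $\langle r, \det L \rangle \in \{2, 3\}$ is impossible, for by the first assertions of Propositions \ref{prop:2} and \ref{prop:3} it would imply $mincol_{r}L \in \{2, 3\}$. Therefore $\langle r, \det L \rangle \in \{5, 7\}$.

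The main obstacle is the diagrammatic construction of a non-trivial $p$-coloring with exactly $4$ colors for $p \in \{5,7\}$ in the forward direction; everything else then falls out of the earlier propositions, Lemma \ref{lem:mainlemma}, and Theorem \ref{thm:extsaito}. The case $p=7$ is the more delicate of the two, because there is less combinatorial slack in choosing which color to eliminate and the local replacement tangles are less symmetric than for $p=5$; this is precisely the place where a careful, case-by-case analysis of the possible triples meeting at a crossing (as rehearsed in the proof of Theorem \ref{thm:extsaito}) will be indispensable.
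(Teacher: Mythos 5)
Your proposal is correct and follows essentially the same route as the paper: the upper bound $mincol_{r}L\le 4$ comes from the theorems of Satoh (for $p=5$) and Oshiro (for $p=7$) lifted via Lemma \ref{lem:pcolrcol}, the lower bound from Propositions \ref{prop:2} and \ref{prop:3}, and the converse from Lemma \ref{lem:mainlemma} combined with Theorem \ref{thm:extsaito}. The only difference is presentational: the paper invokes the Satoh and Oshiro results as black boxes rather than re-deriving the four-color diagrams, and your final step pinning down $\langle r,\det L\rangle\in\{5,7\}$ (rather than just exhibiting some prime $p\in\{5,7\}$ dividing $(r,\det L)$) is spelled out a bit more carefully than in the paper.
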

\begin{proof}Let $L$ be a non-split link. If $\langle r, \det L \rangle = 5$, then $5\mid r$ and $5\mid \det L$. In particular, there is a non-trivial $5$-coloring of $L$. By the result of Satoh (\cite{satoh}), there is a non-trivial $5$-coloring of $L$ using only $4$ colors, and $4=mincol_5 L$. By Lemma \ref{lem:pcolrcol} there is a non-trivial $r$-coloring of $L$ with $4$ colors. It cannot use less than $4$ colors for otherwise by Propositions \ref{prop:2} and \ref{prop:3}, $\langle r, \det L \rangle \in \{ 2, 3 \}$, which conflicts with the hypothesis. Then $mincol\sb{r}L=4$. Likewise, if $\langle r, \det L \rangle = 7$, we invoke the result of Oshiro (\cite{Oshiro}) to obtain a non-trivial $7$-coloring of $L$ using only $4$ colors (with $4=mincol_7 L$) to arrive at $mincol\sb{r}L=4$.

Now assume $L$ is a link with $\det L\neq 0$ and $mincol\sb{r}L = 4$. Then, there is a non-trivial $r$-coloring of $L$ with $4$ colors. By Lemma \ref{lem:mainlemma} there is a prime $p\mid (r, \det L )$ such that $mincol\sb{p}L=mincol\sb{r}L=4$. Then, Theorem \ref{thm:extsaito} together with Propositions \ref{prop:2} and \ref{prop:3}, imply that $p\in \{ 5, 7 \}$, which concludes the proof.
\end{proof}

\bigbreak

\begin{proof}[Proof of Theorem \ref{thm:exactmin}] Statements {\rm 1., 2., 3.} in the Theorem are weaker versions of the statements in Propositions \ref{prop:2}, \ref{prop:3}, and \ref{prop:5,7}, respectively, which we have already proved. Then
\[
\langle r, \det L \rangle \notin \{ 2, 3, 5, 7 \}, \quad \text{ if, and only if, } \quad mincol\sb{r}L \geq 5
\]
otherwise conflicting at least with one of these Propositions. This concludes the proof of Theorem \ref{thm:exactmin}.
\end{proof}

\bigbreak

\bigbreak

\end{document}